\newtheorem{theorem}{Theorem}
\newtheorem{definition}[theorem]{Definition}
\newtheorem{lemma}[theorem]{Lemma}
\newtheorem{remark}[theorem]{Remark}
\newtheorem{proposition}[theorem]{Proposition}
\DeclarePairedDelimiter{\set}{\{}{\}}
\DeclarePairedDelimiter{\floor}{\lfloor}{\rfloor}
\begin{document}

\title[Half of an antipodal spherical design]{Half of an antipodal spherical design}

\author{Eiichi Bannai}

\address{School of Mathematical Sciences, Shanghai Jiao Tong University, Shanghai, China.}

\email{bannai@sjtu.edu.cn}

\author{Da Zhao}

\address{School of Mathematical Sciences, Shanghai Jiao Tong University, Shanghai, China.}

\email{jasonzd@sjtu.edu.cn}

\author{Lin Zhu}

\address{School of Mathematical Sciences, Shanghai Jiao Tong University, Shanghai, China.}

\email{zhulin2323@163.com}

\author{Yan Zhu}

\address{School of Mathematical Sciences, Shanghai Jiao Tong University, Shanghai, China.}

\email{zhuyan@sjtu.edu.cn}

\author{Yinfeng Zhu}

\address{School of Mathematical Sciences, Shanghai Jiao Tong University, Shanghai, China.}

\email{fengzi@sjtu.edu.cn}

\subjclass{05B30, 05B35}

\keywords{spherical design, Leech lattice, antipodal}

\begin{abstract}
    We investigate several antipodal spherical designs on whether we can choose half of the points, one from each antipodal pair, such that they are balanced at the origin. In particular, root systems of type A, D and E, minimal points of Leech lattice and the unique tight 7-design on $S^{22}$ are studied. We also study a half of an antipodal spherical design from the viewpoint of association schemes and spherical designs of harmonic index $T$. 
\end{abstract}

\maketitle

\section{Introduction} \label{sec:Breakfast}
    We first study the following two problems:
    \begin{enumerate}[label=$(\arabic*)$]
        \item \label{itm:Indication} Consider the root system of type $E_8.$ 
        Can we choose a half of the 240 vectors by picking one from each antipodal pair so that their sum is the zero vector?
        \item \label{itm:Kilt} Consider 196560 minimum vectors of Leech lattice. 
        Can we choose a half of the 196560 vectors by picking one from each antipodal pair so that their sum is the zero vector?
    \end{enumerate}
    
    Since these problems are very simply minded, we thought they might have been studied by some people already. But we could not find them in the literature. 
    Also, the solutions were not so obvious nor easy to get in spite of the innocent looking nature of the problem. So we decided to present the results together with the explanation of our motivation why we were interested in these problems.

    From now on it is assumed that we take one from each antipodal pair when we say `half of', unless stated otherwise.

    In Section~\ref{sec:Control}, we give a positive answer to Problem~\ref{itm:Indication}. In fact, we discuss more generally when this property holds for the root vectors of type $A_l, D_n$ and $E_m$. 
    In Section~\ref{sec:Halibut}, we give a positive answer to Problem~\ref{itm:Kilt}. 
    Then, in Section~\ref{sec:Travel}, we explain why we have been interested in these problems.
    In fact, the property that the sum of these unit vectors is the origin $\bm{0}$ is nothing but they form a spherical 1-design. Our interest has arisen when we studied spherical designs of harmonic index $T$ for some subset $T$ of natural numbers. 
                                We also discuss these half subsets considered in Problem~\ref{itm:Indication} and Problem~\ref{itm:Kilt}, from the viewpoint of association schemes.

\section{\texorpdfstring{Case $A_l$ ($l\geq 1$), $D_n$ ($n\geq 4$), $E_m$ ($m=6, 7, 8$)}{Case Al, Dn, Em} } \label{sec:Control}
    In this section we consider a slightly general problem whether we can choose half roots from a root system 
        so that their sum is zero. Note that it is equivalent to ask that whether one can assign plus signs and minus signs in front of all positive roots, such that their sum equals $\bm{0}$. It is also equivalent to asking the question that whether one can choose some positive roots such that their sum equals half sum of the positive roots, which is usually denoted by $\rho$ in Lie theory.

    In this section we will determine which root system of type A, D or E has such property. 

    \begin{proposition} \label{pro:halfOfAl}
        There exists a half of $A_l$ such that their sum is $\bm{0}$ if and only if $l \equiv 0 \pmod{2}$.
    \end{proposition}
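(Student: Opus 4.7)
I will realize $A_l$ in the standard way as the set of vectors $e_i-e_j$ ($i\neq j$) in $\mathbb{R}^{l+1}$, where $e_1,\dots,e_{l+1}$ is the usual basis. An antipodal pair is $\{e_i-e_j,\,e_j-e_i\}$ for $1\le i<j\le l+1$, so choosing a half is exactly the same as orienting each edge of the complete graph $K_{l+1}$: declare $i\to j$ if we pick $e_i-e_j$. The sum of the chosen vectors then equals
\[
 \sum_{i=1}^{l+1}\bigl(d^+(i)-d^-(i)\bigr)\,e_i,
\]
where $d^+(i)$ and $d^-(i)$ are the out- and in-degrees of $i$ in the resulting tournament. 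Since the vectors $e_1,\dots,e_{l+1}$ are linearly independent, the sum is $\bm{0}$ if and only if $d^+(i)=d^-(i)$ for every $i$.

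\textbf{Necessity.} If such a half exists, then for each vertex $i$ of $K_{l+1}$ we have $d^+(i)+d^-(i)=l$ and $d^+(i)=d^-(i)$, hence $l$ must be even.

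\textbf{Sufficiency.} Assume $l$ is even, so $K_{l+1}$ has an odd number of vertices and every vertex has even degree $l$. By Euler's classical theorem, $K_{l+1}$ admits a closed Eulerian circuit. Orient every edge in the direction it is traversed by this circuit. Then at each vertex the number of times the circuit enters equals the number of times it leaves, so $d^+(i)=d^-(i)=l/2$ for every $i$. The corresponding half of $A_l$ sums to $\bm{0}$.

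\textbf{Main obstacle.} The only nontrivial point is the combinatorial reformulation; once a half is viewed as an orientation of $K_{l+1}$, both directions become immediate from the parity of vertex degrees and the existence of an Eulerian circuit. I anticipate the argument will be short, and the same tournament/Euler-circuit viewpoint is likely to recur in the $D_n$ and $E_m$ cases of this section.
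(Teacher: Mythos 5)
Your proof is correct. The necessity direction is, after translation, the same parity count as the paper's: the paper observes that the coefficient of $\varepsilon_1$ in $\rho$ is the half-integer $l/2$ when $l$ is odd while any sum of roots of the form $\varepsilon_i-\varepsilon_j$ has integer coordinates, which is exactly your $d^+(1)+d^-(1)=l$ with $d^+(1)=d^-(1)$. The sufficiency direction is where you genuinely diverge: the paper exhibits the explicit identity $\sum_{1\leq i<j\leq 2k+1}(-1)^{i+j}(\varepsilon_i-\varepsilon_j)=\bm{0}$, i.e.\ a concrete balanced orientation of $K_{2k+1}$ (orient $i\to j$ according to the parity of $i+j$), whereas you invoke the existence of an Eulerian circuit in $K_{l+1}$ and orient along it. Your argument is more conceptual and makes the ``balanced tournament'' structure of the problem transparent; the paper's identity has the advantage of being directly checkable and of setting up the analogous explicit identities it then uses for $D_n$. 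One caveat about your closing remark: the Euler-circuit viewpoint does not simply carry over to $D_n$ and $E_m$, since there the roots $\pm\varepsilon_i\pm\varepsilon_j$ (and the half-integer vectors) are no longer edges of a graph in the same way, and the paper handles those cases by separate explicit sign choices rather than by any Eulerian argument.
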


    \begin{proposition} \label{pro:halfOfDn}
        There exists a half of $D_n$ such that their sum is $\bm{0}$ if and only if $n \equiv 0,1 \pmod{4}$.
    \end{proposition}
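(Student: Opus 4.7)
The plan is to recast the question as an orientation problem on the complete graph $K_n$. The root system $D_n$ consists of the vectors $\pm e_i \pm e_j$ for $1 \le i < j \le n$, and the unordered pair $\{i,j\}$ indexes the two antipodal pairs $\{\pm(e_i+e_j)\}$ and $\{\pm(e_i-e_j)\}$. A choice of one vector from each such pair can be written as $s_{ij}(e_i+e_j)+t_{ij}(e_i-e_j)$ with $s_{ij},t_{ij}\in\{\pm 1\}$; expanding gives $(s_{ij}+t_{ij})e_i+(s_{ij}-t_{ij})e_j$, in which exactly one coefficient is nonzero and equals $\pm 2$. So the choice for edge $\{i,j\}$ contributes either $\pm 2 e_i$ or $\pm 2 e_j$ to the overall sum, which amounts to orienting each edge of $K_n$ toward one of its endpoints and attaching a sign.

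Next, I would translate the vanishing of the total sum into a purely combinatorial condition. Let $a_k$ and $b_k$ count the edges oriented toward vertex $k$ with sign $+$ and $-$ respectively; then the coefficient of $e_k$ in the total sum is $2(a_k-b_k)$. The sum vanishes iff $a_k=b_k$ for every $k$, which forces the in-degree $d_k:=a_k+b_k$ to be even at every vertex. Conversely, given any orientation of $K_n$ in which each in-degree is even, assigning signs so that exactly half the in-edges at each vertex get $+$ and half get $-$ produces a zero-sum half of $D_n$. The problem therefore reduces to: does $K_n$ admit an orientation in which every vertex has even in-degree?

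For necessity, summing in-degrees yields $\sum_k d_k = \binom{n}{2}$, so all $d_k$ even forces $\binom{n}{2}=n(n-1)/2$ to be even, equivalently $n\equiv 0,1\pmod 4$. For sufficiency, I would start from any initial orientation and let $U$ denote the set of vertices of odd in-degree; since $\sum_k d_k\equiv |U|\pmod 2$ and $\binom{n}{2}$ is even under our assumption, $|U|$ is even. Pair up the vertices in $U$ arbitrarily; the edges realizing distinct pairs are distinct edges of $K_n$ (available since $K_n$ is complete), and reversing each such edge toggles the in-degree parity at precisely its two endpoints and nowhere else. After all these reversals every in-degree is even, and we complete the construction by the splitting procedure above.

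The main obstacle is the initial reformulation, namely recognizing that each edge $\{i,j\}$ must contribute either a pure $\pm 2 e_i$ or a pure $\pm 2 e_j$ (never a mixture), so that the apparently four-way choice on each edge collapses to choosing an endpoint together with a sign. Once that dictionary is in place, both directions reduce to the elementary parity identity $\binom{n}{2}\equiv 0\pmod 2 \iff n\equiv 0,1\pmod 4$ together with the standard edge-flipping argument that corrects odd in-degrees.
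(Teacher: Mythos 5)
Your proof is correct, but it takes a genuinely different route from the paper's. You recast the choice for each pair $\{i,j\}$ as an orientation-plus-sign on the edges of $K_n$, using the key observation that $s_{ij}(\varepsilon_i+\varepsilon_j)+t_{ij}(\varepsilon_i-\varepsilon_j)$ always collapses to a pure $\pm 2\varepsilon_i$ or $\pm 2\varepsilon_j$; the zero-sum condition then becomes ``every in-degree is even,'' and existence follows from the standard edge-flipping argument that pairs up the (evenly many) odd-in-degree vertices. The paper instead works with $\rho=\frac{1}{2}\sum_{\alpha\in\Pi}\alpha$: the necessity direction is the same parity obstruction you use ($\binom{n}{2}$ must be even, i.e.\ the coordinate sum of $\rho$ must be even for $\rho$ to lie in the root lattice), but for sufficiency the paper exhibits two explicit signed identities, one for $n=4k$ and one for $n=4k+1$, with signs given by closed formulas such as $(-1)^{\lfloor (i+j)/2\rfloor}$ and $(-1)^{i+j}$. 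Your argument is less explicit but more conceptual and self-correcting: it makes clear that the only obstruction is the global parity of $\binom{n}{2}$, avoids having to guess and verify the explicit sign patterns, and splits the two cases $n\equiv 0$ and $n\equiv 1 \pmod 4$ uniformly. The paper's version has the advantage of producing a concrete half that one can write down and reuse (as it does for the $D_8$ part of $E_8$ and the $D_{24}$ part of the Leech lattice minimal vectors).
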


    \begin{proposition} \label{pro:halfOfEm}
        There exists a half of $E_6$ or $E_8$ such that their sum is $\bm{0}$ but not for $E_7$.
    \end{proposition}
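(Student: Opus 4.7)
The key reformulation is that choosing signs $\epsilon_\alpha \in \{\pm 1\}$ for each positive root $\alpha$ with $\sum_\alpha \epsilon_\alpha \alpha = \bm{0}$ is equivalent to finding a subset $S \subseteq \Phi^+$ with $\sum_{\alpha \in S}\alpha = \rho$, where $\rho = \tfrac12\sum_{\alpha \in \Phi^+}\alpha$ is the Weyl vector (this is immediate from $\sum_\alpha \epsilon_\alpha \alpha = 2\sum_S \alpha - 2\rho$). Any such sum lies in the root lattice $Q$, so $\rho \in Q$ is a \emph{necessary} condition. My plan is to verify that this condition fails for $E_7$ (which settles that case) and holds for $E_6$ and $E_8$, then exhibit an explicit witness in each of the latter two cases.

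For $E_7$, I will expand $2\rho = \sum_{\alpha > 0}\alpha$ in the basis of simple roots (Bourbaki numbering) via the inverse Cartan matrix. The coefficient vector works out to $(34,49,66,96,75,52,27)$; three of these entries are odd, so $\rho$ has half-integer simple-root coefficients and $\rho \notin Q(E_7)$, which immediately rules out any half summing to $\bm{0}$. The same kind of computation for $E_6$ yields $\rho = 8\alpha_1 + 11\alpha_2 + 15\alpha_3 + 21\alpha_4 + 15\alpha_5 + 8\alpha_6 \in Q(E_6)$, and for $E_8$ the equality $P(E_8) = Q(E_8)$ gives $\rho \in Q$ for free.

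It then remains to produce explicit halves for $E_6$ and $E_8$. For $E_8$ I plan to use the decomposition $\Phi(E_8) = \Phi(D_8) \sqcup \Sigma$, where $\Sigma$ consists of the 128 ``spin'' roots $\tfrac12(\pm1,\ldots,\pm1)$ with an even number of minus signs. Proposition~\ref{pro:halfOfDn} with $n=8$ supplies a balanced half of $\Phi(D_8)$. For $\Sigma$, fix the first coordinate to be $+1$ in each antipodal pair, identify the 64 representatives with the 64 even-weight vectors $x \in \mathbb{F}_2^7$ via $(s_2,\ldots,s_8) = (1-2x_1,\ldots,1-2x_7)$, and select those in a $5$-dimensional linear subcode $T$ whose coordinate projections are all surjective --- for example $T = \{x \in \mathbb{F}_2^7 : x_1 = x_2,\ x_3 + x_4 + x_5 + x_6 + x_7 = 0\}$, for which $T^\perp = \langle \bm{1}_7, e_1+e_2\rangle$ misses every standard basis vector and its complement. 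By coordinate symmetry the corresponding 64 spin roots (taking the ``positive'' version for $x \in T$ and ``negative'' for $x \notin T$) sum to $\bm{0}$. For $E_6$ I will apply an analogous strategy, using $D_5 \subset E_6$ (with $n=5$) to balance 40 of the 72 roots and then exhibiting a balanced selection among the remaining 16 antipodal pairs by direct inspection of the spinor-type extra roots. The main obstacle is precisely this last explicit construction step: while $\rho \in Q$ is a quick parity check, realizing $\rho$ as a sum of distinct positive roots requires a genuinely combinatorial choice, most delicately on the spin part of $E_8$ which does not lie in any classical subsystem and where the linear-code construction above is essential.
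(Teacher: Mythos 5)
Your proposal is correct, and the existence half of it is essentially the paper's argument: both decompose $E_8$ as $D_8$ together with the 128 spin roots (and $E_6$ as $D_5$ together with 32 extra roots), invoke \cref{pro:halfOfDn} for the classical part, and then select half of the spin part by a linear (coset) condition that is balanced on every coordinate --- the paper fixes $(c_1,c_2,c_3)$ to lie in the four triples with $c_1c_2c_3=1$, while you cut by the subcode $T$ with $T^\perp=\langle \bm{1}_7, e_1+e_2\rangle$; these are interchangeable, since all that is needed is that no coordinate functional vanishes on the chosen subcode. (Your $E_6$ step is left to ``direct inspection,'' but the same trick, e.g.\ selecting the 16 extra roots with $c_1c_2c_3=1$, finishes it; the paper is equally terse here.) Where you genuinely diverge is the $E_7$ obstruction: the paper works in the explicit $\mathbb{R}^8$ model and observes that the 35 antipodal pairs of half-integer roots contribute an odd number of $\pm\tfrac12$'s to the first coordinate, so the sum of any half is never integral there; you instead compute $2\rho=(34,49,66,96,75,52,27)$ in the simple-root basis and note the odd entries force $\rho\notin Q(E_7)$. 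The two are the same parity obstruction ($\rho\notin Q$) in different coordinates: the paper's version is self-contained and needs no table, while yours isolates the clean necessary condition $\rho\in Q$ uniformly across all three types (and makes transparent why $E_8$, with $P=Q$, passes the test for free). Your stated coefficient vector checks out (its entries sum to $399=\sum_i m_i(m_i+1)/2$ over the exponents of $E_7$), so the argument is sound.
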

    In the following proofs, we denote by $\{\varepsilon_i \}_{i=1}^n$ the standard basis of $\mathbb R^n$. 
    \begin{proof}[Proof of \cref{pro:halfOfAl}]
    For $A_l$ case, a simple root system $\Delta$ is \{$\varepsilon_1-\varepsilon_2, \varepsilon_2-\varepsilon_3, \cdots, \varepsilon_l-\varepsilon_{l+1}$\}, and the corresponding positive root system $\Pi$ is \{$\varepsilon_i-\varepsilon_j \mid 1\leq i< j\leq l+1$\}. So $$\rho=\frac{1}{2}\sum\limits_{\alpha \in \Pi} \alpha=\frac{1}{2}(l\varepsilon_1+(l-2)\varepsilon_2+\cdots+(-l+2)\varepsilon_l+(-l)\varepsilon_{l+1}).$$
    Consider the coefficient of $\varepsilon_1$, one can easily see that there does not exist a set of certain positive roots such that their sum equals $\rho$ if $l$ is odd.

    If $l=2k$, we have
    \[
        \sum_{1\leq i < j \leq 2k+1} (-1)^{i+j}(\varepsilon_i - \varepsilon_j)=\bm{0}.
    \]
                                                    Consequently one can choose half roots from $A_l$ 
        such that their sum is zero if and only if $l$ is even.
    \end{proof}

    \begin{proof}[Proof of \cref{pro:halfOfDn}]
    For $D_n$ case, a positive root system $\Pi$ is \{$\varepsilon_i\pm\varepsilon_j \mid 1\leq i< j\leq n$\}. So $$\rho=\frac{1}{2}\sum\limits_{\alpha \in \Pi} \alpha=(n-1)\varepsilon_1+(n-2)\varepsilon_2+(n-3)\varepsilon_3+\cdots+2\varepsilon_{n-2}+\varepsilon_{n-1}.$$
    Note that the sum of the coefficients of these $\varepsilon_i$'s in $\rho$ is $\frac{n(n-1)}{2}$. If $\rho$ can be written as the sum of some positive roots, then the sum of the coefficients of these $\varepsilon_i$'s in $\rho$ must be an even number.
        So $\rho$ is in the root lattice $D_n$ if and only if $\frac{n(n-1)}{2}$ is even, showing the `only if' direction of \cref{pro:halfOfDn}. For the other direction we have the following two identities.

    If $n=4k$, we have the identity
    \[
        \sum_{1\leq i < j \leq 4k} \left[ (-1)^{\floor{\frac{i+j}{2}}}(\varepsilon_i + \varepsilon_j)
         + (-1)^{i+j}(\varepsilon_i - \varepsilon_j)\right] = \bm{0}.
    \]
                                                                                        
    If $n=4k+1$, we have the identity
                \begin{align*}
        \bm{0} & = \sum_{i=1}^{4k} (-1)^i \Big[ (\varepsilon_i + \varepsilon_{4k+1}) - (\varepsilon_i - \varepsilon_{4k+1}) \Big] \\
        & + \sum_{1\leq i < j \leq 4k} \left[ (-1)^{\floor{\frac{i+j}{2}}}(\varepsilon_i + \varepsilon_j) + (-1)^{i+j}(\varepsilon_i - \varepsilon_j)\right].
    \end{align*}
                                                                                    
    Consequently, one can choose half roots from $D_n$ 
        such that their sum is zero if and only if $n=4k$ or $n=4k+1$. 
        \end{proof}

    \begin{proof}[Proof of \cref{pro:halfOfEm}]
        For $E_8$ case, by \cite[pp.~120-121]{CS99} or \cite[p. 283]{Bou02} we have $E_8 = E_8^a \cup E_8^b$, where $E_8^a = \set{\pm \varepsilon_i \pm \varepsilon_j \mid 1 \leq i < j \leq 8}$ and $E_8^b$ consists of some roots with even number of negative coefficients 
        \[
            E_8^b = \set*{\frac{1}{2}\sum_{i=1}^8 c_i \varepsilon_i \;\middle\vert\; c_i\in \{\pm 1\},\prod_{i=1}^8c_i=1}.
        \]
                                                        Note that $E_8^a$ is exactly $D_8$, which has been discussed in \cref{pro:halfOfDn}.
        So we only need to prove that $E_8^b$ has the desired property. 
        Say we choose all those roots such that $(c_1,c_2,c_3)$ are among $(1,1,1)$, $(1,-1,-1)$, $(-1,1,-1)$ and $(-1,-1,1)$. One can check that we take one from each antipodal pair and their sum is indeed ${\bm 0}$.

        For $E_6$ case, by \cite[pp.~125-127]{CS99} we have $E_6 = E_6^a \cup E_6^b$, where $E_6^a = D_5$ and $E_6^b$ consists of some roots with odd number of negative coefficients
        \[
            E_6^b = \set*{\frac{\sqrt{3}}{2} c_6\varepsilon_6 + \frac{1}{2}\sum_{i=1}^5 c_i \varepsilon_i  \;\middle\vert\; c_i\in \{\pm 1\},\prod_{i=1}^6c_i=-1}.
        \]
                                                        The first part is isomorphic to $D_5$ and a similar argument as in the case $E_8$ shows that the second part has the desired property.
    
        For $E_7$ case, it can be represented in a $7$-dimensional subspace of $\mathbb{R}^8$ \cite[pp.~124-125]{CS99}, i.e. 
        \[
            E_7 = \left\{ \varepsilon_i - \varepsilon_j \;\middle\vert\; 1 \leq i \neq j \leq 8 \middle\} \bigcup \middle\{ \frac{1}{2} \sum_{i=1}^8 c_i \varepsilon_i  \;\middle\vert\; c_i\in \{\pm 1\},\sum_{i=1}^8c_i=0\right\}.
        \]
                In order to get a total sum of $\bm{0}$, the total coefficient of $\varepsilon_1$ should be $0$ in particular. But we cannot get an integer from the sum of thirty-five $\frac{1}{2}$ or $-\frac{1}{2}$'s.
                            \end{proof}

    Finally we show that half of $E_8$,
        say $X$, cannot have the structure of an association scheme induced by inner product.
        Suppose $\mathscr{X}=(X,\{R_i\}_{0\leq i \leq 3})$ is indeed one such association scheme where relations are given by $R_i=\{(x,y) \mid \langle x, y\rangle=2-i\},i=0,1,2,3$. Let $p_{ij}^k$ be the corresponding intersection numbers in $X$ and $\widetilde{p}_{ij}^k$ be the corresponding intersection numbers in $E_8$ as an association scheme.  
    Since we choose one point from each antipodal pair, for every fixed $x,y\in X$, we obtain the following relation.
        \begin{align*}
            &\{z\mid\langle z,x\rangle=1,\langle z,y\rangle=-1,z\in E_8\}\\
            =&\{z\mid\langle z,x\rangle=1,\langle z,y\rangle=-1,z\in X\}
            \bigcup\{z\mid\langle z,x\rangle=1,\langle z,y\rangle=-1,-z\in X\}\\
            =&\{z\mid\langle z,x\rangle=1,\langle z,y\rangle=-1,z\in X\}
            \bigcup\{z\mid\langle z,x\rangle=-1,\langle z,y\rangle=1,z\in X\}.
        \end{align*}
    This implies $p_{13}^k+p_{31}^k=\widetilde{p}_{13}^k$, where $k=0,1,2,3$. Since $X$ is a symmetric association scheme, we have $p_{13}^k=p_{31}^k=\frac{1}{2}\widetilde{p}_{13}^k$, where $k=0,1,2,3$. In $E_8$ we have $\widetilde{p}_{13}^1=1$, hence $p_{13}^1=\frac{1}{2}$ which is a contradiction. Therefore we get the following proposition.

    \begin{proposition}
        Every half of $E_8$ (normalized to unit vectors) cannot have the structure of an association scheme of class 3 with inner product $0,\frac{1}{2},-\frac{1}{2}$, even without the assumption that their sum is zero.
    \end{proposition}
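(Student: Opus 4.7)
The plan is to argue by contradiction. Suppose such a symmetric association scheme $\mathscr{X} = (X, \{R_i\}_{0 \leq i \leq 3})$ exists on a half $X$ of $E_8$, and note from the start that no hypothesis on the sum of $X$ is used. I would work at the natural scale where $\|x\|^2 = 2$ for each root $x$, so that the three non-identity inner product values become $1, 0, -1$ and $R_i = \{(x,y) : \langle x,y\rangle = 2-i\}$. Let $p_{ij}^k$ denote the intersection numbers of $\mathscr{X}$ and $\widetilde p_{ij}^k$ those of the well-known association scheme on $E_8$ itself.

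The central step is to relate $p_{13}^k$ to $\widetilde p_{13}^k$ by exploiting the antipodal halving. Fix $x,y \in X$ with $\langle x,y\rangle = 2-k$. The number $\widetilde p_{13}^k$ counts the roots $z \in E_8$ with $\langle z,x\rangle = 1$ and $\langle z,y\rangle = -1$. Since $X$ contains exactly one vector from each antipodal pair, each such $z$ either lies in $X$ or satisfies $-z \in X$, and the involution $z \mapsto -z$ exchanges the pair $(1,-1)$ of inner products with $(-1,1)$. This would give $p_{13}^k + p_{31}^k = \widetilde p_{13}^k$, and the symmetry of $\mathscr{X}$ would then force $p_{13}^k = p_{31}^k = \tfrac{1}{2}\widetilde p_{13}^k$. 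In particular every $\widetilde p_{13}^k$ would have to be an even integer.

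To derive the contradiction, I would exhibit some $k$ for which $\widetilde p_{13}^k$ is odd. A natural concrete choice is $x = \varepsilon_1 + \varepsilon_2$ and $y = \varepsilon_2 + \varepsilon_3$, which satisfy $\langle x,y\rangle = 1$, so $k = 1$. I would then sweep the two standard families of $E_8$-roots. Among integer roots $\pm\varepsilon_i \pm \varepsilon_j$ a short case analysis pinpoints $z = \varepsilon_1 - \varepsilon_3$ as the unique solution; among half-integer roots $\tfrac{1}{2}\sum c_i \varepsilon_i$ the condition $\langle z,x\rangle = 1$ demands $c_1 = c_2 = 1$ while $\langle z,y\rangle = -1$ demands $c_2 = c_3 = -1$, which is incompatible. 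Hence $\widetilde p_{13}^1 = 1$ is odd, contradicting the evenness just established.

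The main obstacle is the finite case check that produces $\widetilde p_{13}^1 = 1$; it is routine once the roots are organized into the integer and half-integer families, but one has to be careful to rule out the half-integer family explicitly. Everything else in the argument is formal, and in particular the conclusion does not depend on whether or not $X$ sums to $\bm 0$.
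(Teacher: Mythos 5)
Your argument is correct and follows essentially the same route as the paper: the antipodal involution $z \mapsto -z$ yields $p_{13}^k + p_{31}^k = \widetilde{p}_{13}^k$, symmetry forces $p_{13}^1 = \tfrac{1}{2}\widetilde{p}_{13}^1$, and $\widetilde{p}_{13}^1 = 1$ gives the contradiction. The only difference is that you explicitly verify $\widetilde{p}_{13}^1 = 1$ with the pair $\varepsilon_1+\varepsilon_2$, $\varepsilon_2+\varepsilon_3$, which the paper merely asserts.
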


\section{Half of minimum vectors of Leech lattice} \label{sec:Halibut}

    There are many approaches to construct the minimum vectors of the Leech lattice, denoted by $\Lambda_{24}(1)$, here we use the following one which relates to the extended Golay code $\mathscr{C}$: The 196560 minimal vectors of Leech lattice could be divided into the following three types \cite[p.~286]{CS99}:

    \begin{enumerate}[label=(\alph*)]
        \item $\Lambda_{24}^{a}(1)$: 1104 vectors of type $(4^2,0^{22})$, with all permutations and all $\pm$ sign choices; \label{itm:l24a}
        \item $\Lambda_{24}^{b}(1)$: 97152 vectors of type $(2^8,0^{16})$, where the positions of `2's correspond to the 759 octads in the extended Golay code, and the choices of $\pm$ signs are restricted by the condition that the total number of minus signs are even; \label{itm:l24b}
        \item $\Lambda_{24}^{c}(1)$: 98304 vectors of type $(-3,1^{23})$, where the `3' can appear in each position and the positions where the $\pm$ signs change correspond to the `1's in the extended Golay code. \label{itm:l24c}
    \end{enumerate}

    Please recall that the extended Golay code $\mathscr{C}$ in $\mathbb{F}_2^{24}$ is generated by the matrix $G=[I\ P]$, where $I$ is the identity matrix of order $12$ and $P$ equals the all `1' matrix $J$ minus the adjacency matrix of icosahedron graph.

    Now we deal with these three types separately. For type \ref{itm:l24a}, it reduces to the case $D_{24}$; For type \ref{itm:l24b}, it reduces to the 759 collections of the set $S$ of case $E_8$; For type \ref{itm:l24c}, we need the following lemma.

    \begin{lemma}
        There exists a half of $\mathscr{C}$, one from each complementary pair, such that their sum equals half of the total sum of $\mathscr{C}$.
    \end{lemma}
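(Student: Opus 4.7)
The plan is to realise a half of $\mathscr{C}$ as an $11$-dimensional linear subcode $\mathscr{C}'\subset\mathscr{C}$ with $\mathbf{1}\notin\mathscr{C}'$. Since $\mathbf{1}\in\mathscr{C}$ and a complementary pair has the form $\{c,c+\mathbf{1}\}$, any such $\mathscr{C}'$ automatically yields the coset decomposition $\mathscr{C}=\mathscr{C}'\sqcup(\mathscr{C}'+\mathbf{1})$, and hence a system of representatives of the $2^{11}=2048$ complementary pairs.

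First I would compute the total coordinate sum in $\mathbb{Z}^{24}$. Because the $i$-th coordinate functional $c\mapsto c_i$ is nonzero on $\mathscr{C}$ for every $i$ (it takes the value $1$ on $\mathbf{1}$), exactly half the codewords of $\mathscr{C}$ have a $1$ in position $i$. Hence $\sum_{c\in\mathscr{C}} c = 2048\,\mathbf{1}$, so the target sum to match is $1024\,\mathbf{1}$. This target is automatically met by $\mathscr{C}'$ as soon as $c\mapsto c_i$ is still nonzero when restricted to $\mathscr{C}'$, since in that case exactly $|\mathscr{C}'|/2=1024$ codewords of $\mathscr{C}'$ have a $1$ at position $i$.

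The remaining task is to choose a linear form $\ell\colon\mathscr{C}\to\mathbb{F}_2$ with $\ell(\mathbf{1})=1$ (so that $\mathbf{1}\notin\mathscr{C}':=\ker\ell$) such that, for every $i$, the $i$-th coordinate form on $\mathscr{C}$ is neither zero nor equal to $\ell$. I would try the explicit choice $\ell(c)=c_1+c_2+c_3$. The verification then rests on the two cardinal facts about the Golay code, self-duality $\mathscr{C}^\perp=\mathscr{C}$ and minimum weight $8$: if the $i$-th coordinate form agreed on $\mathscr{C}$ with $0$ or with $\ell$, then $\varepsilon_i$ or $\varepsilon_i+\varepsilon_1+\varepsilon_2+\varepsilon_3$ (regarded in $\mathbb{F}_2^{24}$) would have to lie in $\mathscr{C}^\perp=\mathscr{C}$; but these vectors have weight $1$, $2$, or $4$, contradicting the minimum weight $8$.

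The main conceptual step is recognising that a codimension-one linear slice of $\mathscr{C}$ is the natural object for selecting one representative from each complementary pair; after that, the construction reduces to choosing $\ell$ of small enough support that $\ell+\varepsilon_i$ stays below the Golay minimum weight for every~$i$, and this step is where the structure of $\mathscr{C}$ does all the work.
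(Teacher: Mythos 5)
Your proof is correct, and in fact the half you select coincides with the paper's up to swapping which representative of each pair is kept: the paper takes the codewords with $c_1+c_2+c_3=1$ (presented as the four cosets $\mathscr{C}_{111}\cup\mathscr{C}_{100}\cup\mathscr{C}_{010}\cup\mathscr{C}_{001}$ of the subcode $\mathscr{C}_{000}$ cut out by the first three message bits of $G=[I\;P]$), while you take the complementary side $\ker(c\mapsto c_1+c_2+c_3)$. Where the two arguments genuinely diverge is in the verification. The paper works with the explicit systematic generator matrix: it decomposes $\mathscr{C}$ into the eight cosets $\mathscr{C}_{ijk}$ and reduces the claim to an identity among the coset representatives $\bm{c}_{ijk}=\bm{x}_{ijk}G$, so the computation is tied to the particular presentation of the Golay code. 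You instead invoke only two intrinsic properties --- self-duality $\mathscr{C}^\perp=\mathscr{C}$ and minimum weight $8$ --- to show that each coordinate functional, and each functional $\varepsilon_i+\ell$, has weight too small to lie in $\mathscr{C}^\perp$, hence every coordinate functional restricts to a nonzero form on the codimension-one subcode and each coordinate of the sum is exactly $1024$. Your route is coordinate-free in the relevant sense and more general: the same three lines prove the statement for any self-dual binary code containing $\bm{1}$ with minimum weight at least $5$, using any odd-support $\ell$ with $|\mathrm{supp}(\ell)|\le 3$, whereas the paper's version is an explicit, elementary check that a reader can verify directly from $G$ without knowing the minimum distance or self-duality.
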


    \begin{proof}
        We denote by $\mathscr{C}_{ijk}$ the codewords in $\mathscr{C}$ which start with $ijk$, where $ijk$ is a 0-1 string of length 3. We take the half to be $\mathscr{C}_{111}\cup\mathscr{C}_{100}\cup\mathscr{C}_{010}\cup\mathscr{C}_{001}$. Their complement codewords are exactly $\mathscr{C}_{000}\cup\mathscr{C}_{011}\cup\mathscr{C}_{101}\cup\mathscr{C}_{110}$. We denote by $\bm{x}_{ijk}$ the row vector ${}^t(i,j,k,0,\dots,0)$ in $\mathbb R^{12}$ and by $\bm{c}_{ijk}=\bm{x}_{ijk}G$ the corresponding codeword. They have the desired property due to the identity $\bm{c}_{111}+\bm{c}_{100}+\bm{c}_{010}+\bm{c}_{001}=\bm{c}_{000}+\bm{c}_{011}+\bm{c}_{101}+\bm{c}_{110}.$
    \end{proof}

    Since the `1's in the extended Golay code correspond to the $\pm$ sign changes in $\Lambda_{24}^{c}(1)$, thus for each one of the 24 collections of vectors in $\Lambda_{24}^c(1)$ with fixed position of $\pm 3$, we can choose half of them, one from each antipodal pair, such that their sum is $\mathbf{0}$. Combine these three types, we get the following proposition.

    \begin{proposition}
        There exists a half of $\Lambda_{24}(1)$ such that their sum is $\mathbf{0}$.
    \end{proposition}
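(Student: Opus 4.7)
The plan is to exploit that the three families $\Lambda_{24}^{a}(1)$, $\Lambda_{24}^{b}(1)$, $\Lambda_{24}^{c}(1)$ are pairwise disjoint, each closed under negation, and together exhaust $\Lambda_{24}(1)$. Hence it suffices to pick, inside each family separately, a set of representatives for the antipodal pairs that sums to $\bm{0}$; the union of these three choices will then witness the proposition. I will handle the three types in turn using the machinery already developed.

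For $\Lambda_{24}^{a}(1)$, after scaling by $1/4$ the vectors $\pm 4\varepsilon_i \pm 4\varepsilon_j$ with $1\le i<j\le 24$ are exactly the roots of $D_{24}$, and antipodal pairs match. Since $24\equiv 0 \pmod{4}$, \cref{pro:halfOfDn} already supplies a half summing to $\bm{0}$. For $\Lambda_{24}^{b}(1)$, I would decompose the family according to the 759 octads of the Golay code $\mathscr{C}$: each octad $O$ supports $2^7=128$ minimum vectors of shape $(\pm 2)^8$ with an even number of minus signs, and after rescaling and restricting to the eight coordinates of $O$ this is exactly the set $E_8^b$ described in the proof of \cref{pro:halfOfEm}. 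Applying the same construction of that proof octad by octad (fixing the values $(c_{i_1},c_{i_2},c_{i_3})$ on three designated coordinates of each octad to the four patterns used in the $E_8$ proof) produces, within each octad, $64$ vectors, one from each antipodal pair, whose sum is $\bm{0}$; summing over the 759 octads keeps the total at $\bm{0}$.

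For $\Lambda_{24}^{c}(1)$, I would fix the position of the coordinate equal to $-3$; there are $24$ such positions, and for each fixed position the remaining $4096$ sign patterns are in antipodal pair preserving bijection with the $4096$ codewords of $\mathscr{C}$, where antipodal vectors correspond to complementary codewords (since the all-ones word lies in $\mathscr{C}$). The lemma just proved gives a half of $\mathscr{C}$ whose sum equals half the total; under this bijection that means, inside each of the $24$ collections, a choice of one representative per antipodal pair with coordinate sums balanced, so the overall sum vanishes. Taking the union over all $24$ positions gives a zero-sum half of $\Lambda_{24}^{c}(1)$.

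The main conceptual work has already been done: the three types are essentially $D_{24}$, a disjoint union of copies of $E_8^b$, and an encoding of $\mathscr{C}$, all of which we have handled. The only subtle step I expect to verify carefully is the bijection in type \ref{itm:l24c} between antipodal pairs of minimum vectors and complementary pairs of Golay codewords, but this follows once one observes that flipping all $\pm$ signs on the $23$ coordinates not carrying $-3$ (which negates the vector after accounting for the $-3$ entry) corresponds exactly to adding the all-ones codeword to the associated Golay codeword. Combining the three choices yields the desired half of $\Lambda_{24}(1)$ with zero sum.
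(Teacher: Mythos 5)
Your proposal is correct and follows essentially the same route as the paper: decompose $\Lambda_{24}(1)$ into the three types, reduce type \ref{itm:l24a} to $D_{24}$ (covered by \cref{pro:halfOfDn} since $24\equiv 0\pmod 4$), handle type \ref{itm:l24b} octad by octad via the $E_8^b$ construction from \cref{pro:halfOfEm}, and handle type \ref{itm:l24c} for each of the $24$ positions of the $\mp 3$ entry via the lemma on halving the Golay code along complementary pairs. You have merely spelled out the bookkeeping (the antipodal--complementary bijection and the coordinate-sum computation) that the paper leaves implicit.
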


    Now we show that half of $\Lambda_{24}(1)$ cannot be an association scheme induced by inner product. Suppose it is, say the relations $R_0,R_1,R_2,R
    _3,R_4,R_5$ are given by inner products $1,0,\frac{1}{4},-\frac{1}{4},\frac{1}{2},-\frac{1}{2}$ respectively. By the same argument in Section~\ref{sec:Control}, we have $p_{45}^4=\frac{1}{2}$, which is impossible. So we get the following proposition.

    \begin{proposition}
        Every half of minimal vectors of Leech lattice (normalized to unit vectors) cannot have the structure of an association scheme of class 5 with inner product $0,\frac{1}{4},-\frac{1}{4},\frac{1}{2},-\frac{1}{2}$, even without the assumption that their sum is zero.
    \end{proposition}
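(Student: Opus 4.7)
The plan is to imitate the argument used for $E_8$ at the end of \cref{sec:Control}. Suppose for contradiction that the half $X$ of $\Lambda_{24}(1)$, normalized to unit vectors, carries a symmetric association scheme with relations $R_0, R_1, \ldots, R_5$ determined by the inner products $1, 0, \tfrac{1}{4}, -\tfrac{1}{4}, \tfrac{1}{2}, -\tfrac{1}{2}$. Write $\widetilde{X}$ for the full unit-normalized minimum shell of the Leech lattice, let $p_{ij}^k$ denote the intersection numbers of $X$, and let $\widetilde{p}_{ij}^k$ be the corresponding counts in $\widetilde{X}$ (well defined as a count of common neighbours of a fixed pair, regardless of whether $\widetilde{X}$ itself is an association scheme).

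The first step is the antipodal decomposition used before. Fix any $x, y \in X$ with $\langle x, y\rangle = \tfrac{1}{2}$ and partition
\[
\{z \in \widetilde{X} : \langle z, x\rangle = \tfrac{1}{2},\ \langle z, y\rangle = -\tfrac{1}{2}\}
\]
according to whether $z \in X$ or $-z \in X$. In the latter case $-z \in X$ satisfies $\langle -z, x\rangle = -\tfrac{1}{2}$ and $\langle -z, y\rangle = \tfrac{1}{2}$. Combined with the symmetry $p_{54}^4 = p_{45}^4$ of the scheme, this gives $\widetilde{p}_{45}^4 = 2p_{45}^4$.

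It therefore suffices to show $\widetilde{p}_{45}^4 = 1$. Rescaling to the native Leech normalization (squared norm $32$), the conditions read $\langle x, y\rangle = \langle z, x\rangle = 16$ and $\langle z, y\rangle = -16$. The candidate $z := x - y$ lies in $\Lambda_{24}$, has squared norm $32 + 32 - 2\cdot 16 = 32$ (hence is a minimum vector), and satisfies both inner-product conditions. Uniqueness follows from the equality case of Cauchy--Schwarz: any admissible $z$ has
\[
\langle z, x - y\rangle = 16 - (-16) = 32 = \lVert z\rVert\,\lVert x - y\rVert,
\]
which forces $z = x - y$.

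Putting the two steps together yields $p_{45}^4 = \tfrac{1}{2}$, which is not a non-negative integer, contradicting the association-scheme hypothesis. The only non-formal ingredient is the identification $\widetilde{p}_{45}^4 = 1$, and this reduces to the short Cauchy--Schwarz uniqueness check just outlined; the rest is a direct transcription of the $E_8$ argument of \cref{sec:Control}.
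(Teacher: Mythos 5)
Your proposal is correct and follows essentially the same route as the paper: the antipodal partition of $\{z\in\widetilde{X}:\langle z,x\rangle=\tfrac12,\ \langle z,y\rangle=-\tfrac12\}$ together with the symmetry $p_{45}^4=p_{54}^4$ gives $p_{45}^4=\tfrac12\widetilde{p}_{45}^4$, and the value $\widetilde{p}_{45}^4=1$ yields the contradiction. The only difference is that you supply an explicit justification of $\widetilde{p}_{45}^4=1$ (via $z=x-y$ and the equality case of Cauchy--Schwarz), which the paper leaves implicit as a known intersection number of the Leech minimal-vector scheme; that verification is a welcome addition and is correct.
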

    
\section{Spherical designs of harmonic index \texorpdfstring{$T$}{T}} \label{sec:Travel}
    
    In this section, we shall explain our motivation of these problems and the relationship with the spherical designs of harmonic index $T$.    
            \begin{definition}
     Let $T$ be a subset of $\mathbb N=\{1,2,\ldots\}$.
     A nonempty subset $Y$ of the unit sphere $S^{n-1}$ is called a spherical $T$-design (or spherical design of harmonic index $T$) if the following condition is satisfied
    $$\sum_{\bm{x}\in X}f(\bm{x})=0$$
    for every homogeneous harmonic polynomial $f(\bm{x})=f(x_1,x_2,\ldots ,x_n)$ of degree $i$ with $i\in T.$  
    \end{definition}
    In particular, if $T=\{1, 2,\ldots ,t\},$ then $Y$ is a usual spherical $t$-design which was introduced and well studied by Delsarte-Goethals-Seidel \cite{DGS77}.
    The systematic study on spherical designs of harmonic index $T$ was initiated by Bannai-Okuda-Tagami \cite{BOT15} for $T=\{t\}$ and continued by \cite{OY16} and \cite{ZBBKY17}. 
    Delsarte-Goethals-Seidel introduced a linear programming method to determine the Fisher type lower bound for spherical $t$-designs and the method was generalized to $T=\{t_1,t_2,\cdots,t_\ell\}$ with all $t_i$'s being even positive integers (Cf \cite{BOT15,ZBBKY17}).
    
    A spherical $t$-design is called tight if the lower bound is attained.
    A tight $t$-design in $S^1\subset \mathbb R^2$ is a regular $(t+1)$-gon.
    For $n\geq 3$, Bannai-Damerell \cite{BD79,BD80} proved that if there exists a tight spherical $t$-design on $S^{n-1}$, then $t$ must be in $\{1,2,3,4,5,7,11\}$. 
    The classification of tight spherical $t$-designs was complete for $t=1,2,3,11$.
    Moreover, Bannai-Sloane \cite{BS81} obtained the uniqueness of tight spherical 11-design up to orthogonal transformations, namely the 196560 minimal vectors of the Leech lattice on $S^{23}$.
    When $t=4,5,7$, the classification is still an open problem, but there are some known examples (Cf \cite{DGS77}) and some nonexistence results (Cf \cite{BMV05, NV13}).
    For instance, 240 points of the $E_8$ root system on $S^7$ is a tight spherical 7-design.
    We should remark that tight spherical $(2e+1)$-designs are always antipodal which was proved in \cite{DGS77}.

    In \cite{BGOY15}, they use the terminology {\it tight frames} for spherical designs of harmonic index $\{2\}$ and it is shown that if $X$ is a two-distance set tight frame with two inner products $\{a, b\}$, then one of the following two possibilities holds:
     \begin{enumerate}[label=$(\arabic*)$]
    \item \label{itm:Glut} if $a\neq -b$, then it must be either an $n$-dimensional spherical embedding of a strongly regular graph (SRG) or a shifted $(n-1)$-dimensional spherical embedding of a SRG.
    (Please refer to \cite[Example 9.1]{DGS77} or \cite{BB05} for the construction of spherical embedding of SRGs.)
    \item \label{itm:Guest} if $a=-b$, then $X$ can be identified as a half of an antipodal spherical 3-design.
    \end{enumerate}

    Two-distance tight frames with $a=-b$ are said to be equiangular.
    It is interesting to notice that many (but not all, say icosahedron) equiangular tight frames  have the structure of SRGs, in particular we have no explicit example of non-conference type (i.e. $|X|\neq 2n$) tight frame which does not have this property, so far. 
    In \cite{BBXYZ17}, they study a family of equiangular tight frames having this property which are both two-distance sets and spherical designs of harmonic index $\{4,2\}$, namely, the half of tight spherical $5$-designs. 
     On the other hand, such objects are the spherical embedding of some SRGs satisfying the condition for spherical designs of harmonic index $\{4\}$.
        In following homepage of Brouwer, the list of known SRGs is available.
    \begin{center}
    \verb@http://www.win.tue.nl/~aeb/@
    \end{center} 

    The property is equivalent to the condition that $X$ becomes a spherical $\{4, 2, 1\}$-design. 
    We thought it would be natural to consider what kind of other tight spherical $t$-designs (for odd $t$) have this property, and this led to the Problems~\ref{itm:Indication} and \ref{itm:Kilt} mentioned at the beginning of this paper. 

    We would like to summarize some results in \cref{tab:Degree}.

    \begin{table}[h] 
        \begin{tabular}{|c|c|c|c|}
            \hline
                & Existence & Nonexistence & Unknown \\
            \hline
            $E_8$ & 1         & 3,5          & -     \\
            \hline
            tight $7$-design on $S^{22}$ & 1 & 3 &  5  \\
            \hline
            tight $11$-design on $S^{23}$ & 1 & - & 3,5,7,9 \\
            \hline 
        \end{tabular}
        \caption{Half of an antipodal design being a spherical $T$-design}
        \label{tab:Degree}
    \end{table}

    This table shows whether there exists a half of one antipodal spherical $t$-design being an $\set{2,4, \ldots, 2\floor{t/2}} \cup \set{i}$-design, where $i$ is an odd number less than $t$.

    Another type of results is that every half of $X$ do not carry the structure of an association scheme induced by the inner product, where $X$ is one of the following three designs: $E_8$, the tight $7$-design on $S^{22}$, and the tight $11$-design on $S^{23}$ (normalized minimum vectors of Leech lattice).

    Similar questions could be asked for many tight spherical $(2e+1)$-designs on $S^{n-1} \subset \mathbb{R}^n$.

    \begin{remark}
        To show that every half of a tight spherical $(2e+1)$-design cannot be a spherical design of harmonic index $\{i\}$, we consider `half of' the $i$-th \emph{characteristic matrix} $\widetilde{H}_i$.
        For a fixed orthonormal basis $\{\varphi_{i,\alpha}\}_{\alpha\in\Lambda}$ of homogeneous harmonic polynomials of degree $i$ and a fixed half of the design, say $X$, we define $\widetilde{H}_i=\left(\varphi_{i,\alpha}(x)\right)_{x\in X,\alpha\in\Lambda}$. We only need to show that there is no $\pm 1$ vector in the left null space of $\widetilde{H}_i$. It can be done by computer when the scale is not large. One can prove that ${}^t\widetilde{H}_i\widetilde{H}_j=0$ if $i,j$ are distinct odd numbers and $i+j<2e+1$. We observe that $\widetilde{H}_1$, $\widetilde{H}_3$ of half of $E_8$, minimal vectors of Leech lattice and tight $7$-design on $S^{22}$ are all of full column rank.
    \end{remark}

    \section*{Acknowledgments}

    This work was supported in part by NSFC [Grant No. 11271257].

    \bibliography{NT}
    \bibliographystyle{plain}

\end{document}